\newcommand{\al}{\alpha}
\newcommand{\si}{\sigma}
\newcommand{\Ckh}{\mathcal{C}_{kh}}
\newcommand{\Wkh}{\mathcal{W}_{kh}}
\newcommand{\Rkh}{\mathcal{R}_{kh}}
\newcommand{\la}{\lambda}
\newcommand{\R}{\mathbb{R}}
\newcommand{\p}{\partial}
\newcommand{\scr}{\mathscr}
\newcommand{\eps}{\varepsilon}
\renewcommand{\cal}{\mathcal}
\DeclareMathOperator{\im}{\mathrm{im}}
\DeclareMathOperator{\re}{\mathrm{Re}}
\DeclareMathOperator{\imag}{\mathrm{Im}}
\DeclareMathOperator{\codim}{\mathrm{codim}}
\DeclareMathOperator{\spn}{\mathrm{span}}
\newtheoremstyle{NTS1}{\topsep}{\topsep}%
     {\it}%         Body font
     {}%         Indent amount (empty = no indent, \parindent = para indent)
     {\bf}% Thm head font
     {}%        Punctuation after thm head
     { }%     Space after thm head (\newline = linebreak)
     {\thmname{#1}\thmnumber{ #2}\thmnote{ (#3)}}%         Thm head spec
\theoremstyle{NTS1}
\newtheorem{thm}{Theorem}[section]
\theoremstyle{remark}
\newtheorem*{prems}{Remarks}
\theoremstyle{remark}
\newtheorem*{rem}{Remark}
\numberwithin{equation}{section}
\begin{document}

\title{Global bifurcation for steady finite-depth capillary-gravity waves with constant vorticity}
\author{Peter de Boeck}
\date{}
\maketitle
\begin{abstract}
This paper is concerned with two-dimensional, steady, periodic water waves propagating at the free surface of water in a flow of constant vorticity over an impermeable flat bed. The motion of these waves is assumed to be governed both by surface tension and gravitational forces. A new reformulation of this problem is given, which is valid without restriction on the geometry or amplitude of the wave profiles and possesses a structure amenable to global bifurcation. The existence of global curves and continua of solutions bifurcating at certain parameter values from flat, laminar flows is then established through the application of real-analytic global bifurcation in the spirit of Dancer and the degree-theoretic global bifurcation theorem of Rabinowitz, respectively.\end{abstract}
%\tableofcontents
\section{Introduction}
For many years, the mathematical study of water waves was confined to the irrotational case, that is, of flows without vorticity (cf., e.g., \cite{G,JFT}). The main mathematical advantage to the study of irrotational flows is that one may use conformal mappings to reformulate the problem on a fixed domain, and this is the standard approach in the theory of irrotational water waves. Although considering flows without vorticity provides a certain mathematical simplification and has a real-world interpretation as waves travelling over a uniform current, it is well-known that non-uniform currents lead to flows with non-zero vorticity \cite{Cbook}. The example of a constant vorticity, as treated in this work, is a distinctive attribute of tidal currents, as discussed in \cite{Cbook,TdSP}.

In more recent years, there has been a surge in papers considering the rotational case, principally initiated by the breakthrough in \cite{CS}, that address symmetry \cite{CEW,CE04,E}, regularity \cite{CE11,CV} and existence \cite{CS,CV,V}. However, the majority of the literature is focused on the pure-gravity wave scenario (that is, where the governing force is gravitational in nature and the effects of surface tension are neglected) and it is only comparatively recently that the pure-capillary (where gravitational forces are neglected) and capillary-gravity (considering both gravitational and surface tension forces) cases have been studied in great detail. This has yielded results concerning existence of both small-amplitude \cite{HM,M,Mcap,Wcap,Wc-g} and large-amplitude \cite{Mgb,Walsh} waves of both pure-capillary and capillary-gravity type, as well as other interesting results \cite{Mreg,MM}.

However, while great strides have been made in this area, there has yet to be a work where there is no limitation imposed on the amplitude or geometry of the wave profiles. Indeed, where small-amplitude existence is studied it is not possible to draw any conclusions of large-amplitude existence. Equally, in both \cite{Mgb} and \cite{Walsh} the authors use an approach that requires the profile necessarily to be a graph, viz. a ``flattening'' transformation and a ``semi-Lagrangian" hodograph transformation, respectively. But waves possessing an overhanging profile is a circumstance that is both possible and expected in the case of constant-vorticity capillary-gravity waves. Indeed, numerical studies \cite{KS1,KS2,TdSP} have shown that the addition of constant vorticity to the pure-gravity wave problem generates overhanging profiles and similarly when the effects of surface tension are included (but vorticity is neglected) \cite{AAW,OS}.

Focusing specifically on those works that consider constant (non-zero) vorticity \cite{CSV,CV,M,Mcap,Mreg,MM}, as in this work, the common approach has been to adapt the method of conformal mappings from the irrotational setting. As these many works demonstrate, this reformulation on a fixed domain is an extension of the corresponding irrotational formulation, with extra terms appearing exactly due to the influence of vorticity. However, this reformulation does not seem to be obviously well-suited to the application of the theory of global bifurcation and, as remarked above, when treating the case of global existence, most authors have employed an alternative formulation.

By contrast, in this paper, we present a new formulation that is related to the reformulation via conformal mappings. This new formulation possesses the compactness structure that allows us to prove the existence of global families of finite-depth capillary-gravity water waves with non-zero constant vorticity. The main novelty of this work is that no mathematical restriction is placed upon the profile of the wave; in particular, our analysis provides for profiles that are overhanging and of arbitrary amplitude.

The plan of this paper is as follows.  In \S2 we present the problem and its various reformulations. In \S3 we present the results of global bifurcation.
\section{The free boundary problem}
For any $L>0$, we say that a domain $\Omega\subset\R^2$ is an $L$-periodic strip-like domain if it is horizontally unbounded, having a boundary which consists of the real axis, denoted by $\cal{B}$, and a (possibly unknown) curve $\cal{S}$, given in parametric form by \begin{subequations}\begin{equation}\cal{S}=\{(u(s),v(s)):s\in\R\}\label{surface}\end{equation}such that \begin{equation}u(s+L)=u(s)+L,\quad v(s+L)=v(s)\text{ for all }s\in\R.\label{fbper}\end{equation}\end{subequations}Then, given $L>0$, the problem of finding $L$-periodic travelling gravity-capillary water waves with constant vorticity $\gamma$ in a flow of finite depth over a flat bed can be formulated as the free boundary problem of finding an $L$-periodic strip-like domain $\Omega\subset\R^2$ (where the curve $\cal{S}$ representing the free surface of the water is \emph{a priori} unknown) and a function $\psi:\Omega\to\R$, satisfying the following equations and boundary conditions:\begin{subequations}\label{form}\begin{align}\Delta\psi&=-\gamma\quad\,\,\text{in }\Omega,&\\\psi&=-m\quad\text{on }\cal{B},\\\psi&=0\,\qquad\text{on }\cal{S},\\|\nabla\psi|^2+2gv-2\si\frac{u_sv_{ss}-u_{ss}v_s}{(u_s^2+v_s^2)^{3/2}}&=Q\quad\;\;\;\text{on }\cal{S}.\end{align}\end{subequations}The function $\psi$ represents the stream function, giving the velocity field $(\psi_Y,-\psi_X)$ in a frame moving at the constant wave speed; the parameterisation in \eqref{surface} has also been chosen so as to be time-independent in this moving frame. The constants $g,m\in\R,\si>0$ in \eqref{form} are, respectively, the constant of gravitational acceleration, the relative mass flux and the coefficient of surface tension. The constant $Q\in\R$ is related to the hydraulic head.

These equations may be derived directly from the Euler equations for incompressible fluid flow, as in \cite{M}.

For $p\ge0$ and $\al\in(0,1)$, let $C^{p,\al}$ denote the space of functions whose partial derivatives of order up to $p$ are H\"older continuous of exponent $\al$ over their domain of definition. We denote by $C^{p,\al}_L$ the subspace of $C^{p,\al}(\R)$ consisting of functions that are $L$-periodic. Taking $L=2\pi$, we may define the mean over one period of a function $f\in C^{p,\al}_{2\pi}$ by $$[f]:=\frac1{2\pi}\int_{-\pi}^\pi f(t)\,\mathrm{d}t.$$We denote by $C^{p,\al}_{2\pi,0}$ those functions $f\in C^{p,\al}_{2\pi}$ with zero mean, i.e. $[f]=0$.

Throughout this paper, we are interested in solutions $(\Omega,\psi)$ of the problem \eqref{form} of class $C^{2,\al}$, for some $\al\in(0,1)$, in the sense that $\cal{S}$ has a parameterisation \eqref{surface} with $(s\mapsto u(s)-s),v$ both functions in $C^{2,\al}_L$ with\begin{equation}u'(s)^2+v'(s)^2\ne0\quad\text{for all }s\in\R,\label{nostag}\end{equation}while $\psi\in C^\infty(\Omega)\cap C^{1,\al}(\overline{\Omega})$.
\subsection{A first reformulation}
Using the approach presented in \cite{CV}, it was shown in \cite{M} that \eqref{form} can be reformulated as a problem on a fixed domain, via conformal mappings. The approach to this formulation is as follows.

For and $d>0$ let $$\cal{R}_d:=\{(x,y)\in\R^2:-d<y<0\}.$$For any $w\in C^{p,\al}_{2\pi}$ let $W\in C^{p,\al}(\overline{\cal{R}_d})$ be the unique solution of \begin{align}\Delta W&=0\quad\text{in }\cal{R}_d,\nonumber\\W(x,-d)&=0,\quad x\in\R,\label{Wsystem}\\W(x,0)&=w(x),\quad x\in\R.\nonumber\end{align}The function $(x,y)\mapsto W(x,y)$ is $2\pi$-periodic in $x$ throughout $\cal{R}_d$. Let $Z$ be the unique (up to a constant) harmonic conjugate of $W$, that is, such that $Z+iW$ is a holomorphic function on $\cal{R}_d$. As described thoroughly in \cite[\S2]{CV}, if $w\in C^{p,\al}_{2\pi,0}$ then the function $(x,y)\mapsto Z(x,y)$ is $2\pi$-periodic in $x$ throughout $\cal{R}_d$. We may then prescribe the constant in the definition of $Z$ by requiring that the function $x\mapsto Z(x,0)$ has zero mean.

Then the operator $\cal{C}_d:C^{p,\al}_{2\pi,0}\to C^{p,\al}_{2\pi,0}$ is defined by $$\cal{C}_d(w)(x)=Z(x,0),\quad x\in\R.$$(It was shown in \cite{CV} that the operator $C_d$ satisfies a sort of Privalov's Theorem, and maps $C^{p,\al}$ functions into $C^{p,\al}$ indeed.)

In fact, it is possible to define the operator $\cal{C}_d$ for $2\pi$-periodic functions in $L^2$, following the argument in \cite{CV}. Let $L^2_{2\pi}$ denote the space of $2\pi$-periodic (locally) square-integrable functions of one real variable, and denote by $L^2_{2\pi,0}$ its subspace consisting of elements with zero mean over one period. Every function $w\in L^2_{2\pi}$ has a Fourier series expansion given by $$w(t)=[w]+\sum_{n=1}^\infty a_n\cos{nt}+\sum_{n=1}^\infty b_n\sin{nt}.$$

For any $d>0$ and $w\in L^2_{2\pi}$, the function $W:\cal{R}_d\to\R$, given by $$W(x,y)=\frac{[w]}{d}(y+d)+\sum_{n=1}^\infty a_n\frac{\sinh(n(y+d))}{\sinh(nd)}\cos{nx}+\sum_{n=1}^\infty b_n\frac{\sinh(n(y+d))}{\sinh(nd)}\sin{nx},$$is the unique solution of \eqref{Wsystem}, with the third condition being satisfied in the sense $$\lim_{y\to0}\|W(\cdot,y)-w\|=0,$$where $\|\cdot\|$ denotes the standard $L^2_{2\pi}$ norm or an equivalent one.

If $w\in L^2_{2\pi,0}$ then any harmonic function $Z$ in $\cal{R}_d$ such that $Z+iW$ is holomorphic is given by $$Z(x,y)=C+\sum_{n=1}^\infty a_n\frac{\cosh(n(y+d))}{\sinh(nd)}\sin{nx}-\sum_{n=1}^\infty b_n\frac{\cosh(n(y+d))}{\sinh(nd)}\cos{nx},$$where $C$ is a constant. Choosing $C=0$, we can define $\cal{C}_d(w)$ to be the unique function in $L^2_{2\pi,0}$ such that $$\lim_{y\to0}\|Z(\cdot,y)-\cal{C}_d(w)\|=0;$$that is, $$\cal{C}_dw(t)=\sum_{n=1}^\infty a_n\coth(nd)\sin{nt}-\sum_{n=1}^\infty b_n\coth(nd)\cos{nt}.$$

It was also shown in \cite{CV} that for any $L$-periodic strip-like domain, $\Omega$, there exists a unique positive number $h$, called its conformal mean depth, such that $\Omega$ is the image of a conformal mapping $U+iV$ from $\Rkh$ such that \begin{eqnarray*}U(x+2\pi,y)&=&U(x,y)+L\\V(x+2\pi,y)&=&V(x,y)\end{eqnarray*}for $(x,y)\in\Rkh$, where $L=2\pi/k$. In particular, it follows that $\cal{S}$ is parameterised by functions $u\in C^{2,\al},v\in C^{2,\al}_{2\pi}$ with mean $[v]=h$. It thus follows that for any $L$-periodic strip-like domain we may make a vertical translation to place its flat bottom at $Y=-h$, where $h$ is its conformal mean depth, whence it is clear that the free surface will admit a parameterisation with a zero-mean $Y$ coordinate.

Bearing in mind that we are free to make this vertical translation we may now present the reformulation, as given in \cite[Theorem 1]{M}, but where suitable adjustments have been made to account for this vertical translation, and also a substitution has been made to rescale the function of interest.

We have that the problem of finding $(\Omega,\psi)$ of class $C^{2,\al}$ solving \eqref{form} is equivalent to finding an $h>0$ and $w\in C^{2,\al}_{2\pi,0}$ such that \begin{subequations}\label{next}\begin{align}&\left\{\frac{m}{h}-\frac{\gamma h}{2}+\frac\gamma k\left(\frac{[w^2]}{2kh}+\Ckh(ww')-w-w\Ckh w'\right)\right\}^2\nonumber\\&\qquad\qquad\qquad\qquad\qquad=\left(Q-\frac{2gw}k+2\sigma k\frac{(1+\Ckh w')w''-w'\Ckh w''}{\Wkh(w)^{3/2}}\right)\Wkh(w)\label{2eqn}\\&w(t)>-kh\quad\text{for all }t\in\mathbb{R}&\label{abovebed}\\&\text{the mapping }t\mapsto\left(t+\Ckh w(t),w(t)\right)\text{ is injective on }\mathbb{R}\label{injcond}\\&\Wkh(w)(t)\ne0\quad\text{for all }t\in\mathbb{R},\label{Wkh0}\end{align}\end{subequations}where \begin{equation}\Wkh(w)=w'^2+(1+\Ckh w')^2\label{Wkh}\end{equation}and, as above, $L=2\pi/k$.

Here it should be noted that the formulation in \cite{M} has been altered, using the general theory of periodic harmonic functions on a strip from \cite{CV}, to use the operator $\Ckh$, the periodic Hilbert transform for a strip.  A full and coherent treatment of the properties of this and related operators can be found in \cite{CV}.

Given an $h>0$ and $w\in C^{2,\al}_{2\pi,0}$ satisfying \eqref{next}, it follows that the fluid region is bounded by the curves $$\mathcal{B}=\{(X,-h):X\in\mathbb{R}\},\qquad\mathcal{S}=\left\{\left(\frac1k(t+\Ckh w(t)),\frac{w(t)}k\right):t\in\mathbb{R}\right\}.$$Indeed, suppose that $h,w$ satisfy \eqref{next}. Letting $V$ be the harmonic function on $\Rkh$ satisfying $$V(x,-kh)=-h\quad\text{and}\quad V(x,0)=w(x)/k,$$and $U$ such that $U+iV$ is holomorphic, then it follows that $U+iV$ is a conformal mapping from $\Rkh$ onto the domain $\Omega$, which is bounded by $\cal{B}$ and $\cal{S}$, that extends as a homeomorphism between the closures of these domains. The conditions satisfied by $w$ then imply that $\Omega$ is a $2\pi/k$-periodic strip-like domain with conformal mean depth $h$. Moreover, if $\zeta\in C^{2,\al}(\overline{\Rkh})\cap C^{\infty}(\Rkh)$ is the unique solution of \begin{align*}\Delta\zeta&=0\quad\text{in }\Rkh,\\\zeta(x,-kh)&=0\quad x\in\R,\\\zeta(x,0)&=m+\frac{\gamma}{2k^2}w^2(x)\quad x\in\R,\end{align*}and we define $\psi$ by $$\psi(U(x,y),V(x,y))=\zeta(x,y)-m-\frac{\gamma}{2}V^2(x,y)\qquad (x,y)\in\Rkh,$$then \eqref{next} implies that $\psi$ satisfies \eqref{form}.

We remark at this point that it should be noted that for the purposes of the analysis we shall ignore the conditions \eqref{abovebed}, \eqref{injcond}, simply solving \eqref{2eqn} and \eqref{Wkh0} then throwing away any solutions which violate \eqref{abovebed}, \eqref{injcond} afterwards. Hence, throughout we shall remove this conditions, leaving it to the reader to remember that any solutions found may in fact have to be ignored if it happens that they violate one or both of these conditions.
\subsection{An additional reformulation of the problem}
As was shown earlier, and in view of the previous remark, we aim to find $h,k,m,Q,\gamma\in\mathbb{R},\;h,k>0$ and $w\in C^{2,\al}_{2\pi,0}$ such that \eqref{2eqn} holds, subject to \eqref{Wkh0}. It is convenient to rewrite \eqref{2eqn} in the form\begin{multline}\left\{\frac{m}{h}-\frac{\gamma h}{2}+\frac\gamma k\left(\frac{[w^2]}{2kh}+\Ckh(ww')-w-w\Ckh w'\right)\right\}^2\Wkh(w)^{-1/2}\\=\left(Q-\frac{2gw}{k}\right) \Wkh(w)^{1/2}+2\sigma k\frac{(1+\Ckh w')w''-w'\Ckh w''}{\Wkh(w)}\label{eqn2a}.\end{multline}However, it is clear that this equation will not be suitable when we come to approach Global Bifurcation. For this we will require an alternative equation, the derivation of which is included in the following result on equivalence.\begin{thm} For $\si\ne0$, and for any $\gamma\in\mathbb{R}$ and $h,k>0$, we have that $m,Q\in\mathbb{R}$ and $w\in C^{2,\al}_{2\pi,0}$ with $$\Wkh(w)(t)\ne0\quad\text{for all }t\in\mathbb{R}$$satisfies \eqref{eqn2a} if and only if \begin{multline}Q=\left[\Wkh(w)^{1/2}\right]^{-1}\left[\left\{\frac{m}{h}-\frac{\gamma h}{k}+\frac{\gamma}{k}\left(\frac{[w^2]}{2kh}+\mathcal{C}_{kh}(ww')-w-w\mathcal{C}_{kh}w'\right)\right\}^2\Wkh(w)^{-1/2}\right.
\\\left.+\frac{2gw}k\Wkh(w)^{1/2}\right]\label{mudef}\end{multline}and\begin{multline}w''=\frac{w'}{2\sigma k}\Ckh\left(\left\{\frac{m}{h}-\frac{\gamma h}2+\frac{\gamma}k\left(\frac{[w^2]}{2kh}+\mathcal{C}_{kh}(ww')-w-w\mathcal{C}_{kh}w'\right)\right\}^2\Wkh(w)^{-1/2}-\right.\\\left.-\left(Q-\frac{2gw}k\right)\Wkh(w)^{1/2}\right)+\\+\frac{1}{2\sigma k}(1+\Ckh w')\left(\left\{\frac{m}{h}-\frac{\gamma h}2+\frac{\gamma}k\left(\frac{[w^2]}{2kh}+\mathcal{C}_{kh}(ww')-w-w\mathcal{C}_{kh}w'\right)\right\}^2\Wkh(w)^{-1/2}\right.-
\\\left.-\left(Q-\frac{2gw}k\right)\Wkh(w)^{1/2}\right)\label{2nd}.\end{multline}\end{thm}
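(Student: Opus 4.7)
The plan is to pass between \eqref{eqn2a} and the pair \eqref{mudef}, \eqref{2nd} by algebraic rearrangement combined with a Hilbert-transform identity for holomorphic functions on $\Rkh$. Writing $A$ for the braced quantity $\tfrac{m}{h} - \tfrac{\gamma h}{2} + \tfrac{\gamma}{k}\bigl(\tfrac{[w^2]}{2kh} + \Ckh(ww') - w - w\Ckh w'\bigr)$ and introducing $\Pi := A^2\Wkh(w)^{-1/2} - (Q - 2gw/k)\Wkh(w)^{1/2}$ together with $G := (1+\Ckh w')w'' - w'\Ckh w''$, the equation \eqref{eqn2a} collapses to $\Pi\Wkh(w) = 2\sigma k G$, \eqref{mudef} to the scalar constraint $[\Pi] = 0$, and \eqref{2nd} to $2\sigma k w'' = (1+\Ckh w')\Pi + w'\Ckh\Pi$. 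Differentiating $\Wkh(w) = w'^2 + (1+\Ckh w')^2$ furnishes the purely kinematic identity $\Wkh(w)' = 2[w'w'' + (1+\Ckh w')\Ckh w'']$. The central geometric object is the holomorphic function $F := 1 + (Z + iW)_x$ on $\Rkh$, where $W, Z$ are the harmonic pair associated to $w$ in the construction of $\Ckh$; its top trace is $(1+\Ckh w') + iw'$ while $F$ is real on $\{y=-kh\}$ since $W$ vanishes identically there, so $\imag(F'/F)|_{y=0} = G/\Wkh(w)$ and $\re(F'/F)|_{y=0} = \Wkh(w)'/(2\Wkh(w))$ are boundary traces of conjugate harmonic functions with the first vanishing on the bottom, and the defining property of $\Ckh$ yields the identity $\Ckh(G/\Wkh(w)) = \Wkh(w)'/(2\Wkh(w))$.

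For the forward implication, I would treat \eqref{eqn2a} and the kinematic identity as a $2\times 2$ linear system in $(w'', \Ckh w'')$ with determinant $\Wkh(w)>0$; inverting yields $w'' = (1+\Ckh w')\Pi/(2\sigma k) + w'\Wkh(w)'/(2\Wkh(w))$. Applying $\Ckh$ to $\Pi = 2\sigma k G/\Wkh(w)$ and invoking the Hilbert-transform identity gives $\Ckh\Pi = \sigma k\Wkh(w)'/\Wkh(w)$; substitution then produces \eqref{2nd}. Taking the mean of $\Pi = 2\sigma k G/\Wkh(w)$ and noting that $G/\Wkh(w) = (\arg F)'$ has zero mean when the winding number of $F$ about the origin vanishes, yields $[\Pi] = 0$, i.e., \eqref{mudef}.

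For the converse, under \eqref{mudef} the function $F_2 := \hat Z + i\hat W$ with $\hat W|_{y=0} = \Pi$ and $\hat W|_{y=-kh} = 0$ is well-defined and holomorphic on $\Rkh$. A direct computation shows that \eqref{2nd} amounts to $\imag(FF_2 - 2\sigma k F')|_{y=0} = 0$, while on $\{y=-kh\}$ the three factors $F$, $F_2$, $F'$ are all real-valued, so the imaginary part of the holomorphic function $FF_2 - 2\sigma k F'$ vanishes on both boundary components of $\Rkh$. By the maximum principle the imaginary part vanishes throughout, so $FF_2 - 2\sigma k F'$ is real-valued holomorphic, hence a real constant on the strip. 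Identifying this constant as zero (for instance, by continuity back to the trivial solution $w=0$, where all three objects vanish) and reading off the real part of $FF_2 = 2\sigma k F'$ at $\{y=0\}$ gives $(1+\Ckh w')\Ckh\Pi - w'\Pi = 2\sigma k \Ckh w''$; combining with \eqref{2nd} produces $G = \Pi\Wkh(w)/(2\sigma k)$, which is \eqref{eqn2a}.

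The chief obstacle I anticipate is the rigorous justification of the Hilbert-transform identity $\Ckh(G/\Wkh(w)) = \Wkh(w)'/(2\Wkh(w))$ and the accompanying mean-vanishing $[(\arg F)'] = 0$: these both require $F$ to be non-vanishing in $\overline{\Rkh}$ with zero winding about the origin and the convention $\arg F|_{y=-kh} = 0$ (which depends on positivity of $F$ on the bottom). A secondary delicacy in the converse is the argument that the constant $FF_2 - 2\sigma k F'$ actually vanishes, rather than being merely a real constant.
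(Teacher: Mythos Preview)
Your overall strategy coincides with the paper's: both arguments hinge on the holomorphic function on $\Rkh$ with top trace $(1+\Ckh w')+iw'$ (your $F$, the paper's $B$), the observation that $F'/F$ has imaginary part $G/\Wkh(w)$ and real part $\tfrac12(\log\Wkh(w))'$ on $\{y=0\}$, and the resulting identity $\Ckh\bigl(G/\Wkh(w)\bigr)=\Wkh(w)'/(2\Wkh(w))$. Your forward implication is correct and is the paper's computation in different packaging (the paper writes it as $w''=w'\Ckh\hat w+(1+\Ckh w')\hat w$ with $\hat w:=G/\Wkh(w)$). The ``winding number'' concern you raise is handled, equivalently, by noting that $\re(F'/F)$ is $2\pi$-periodic, which forces $[\imag(F'/F)|_{y=0}]=0$ by the general theory underlying $\Ckh$.

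The genuine gap is in the converse, and you have correctly located it: having shown $FF_2-2\si kF'\equiv c$ for some real constant $c$, you must prove $c=0$. Your proposed fix ``by continuity back to the trivial solution $w=0$'' is not valid, because $w$ is a \emph{fixed} function in the hypothesis of the theorem; there is no one-parameter deformation to $0$ available, and $c$ depends on this fixed $w$. The paper closes this as follows (recast in your notation). Divide $FF_2-2\si kF'=c$ by $F$ to obtain $F_2-2\si k\,F'/F=c/F$, and take real parts on $\{y=0\}$:
\[
\Ckh\Pi-\si k\,\frac{\Wkh(w)'}{\Wkh(w)}=c\,\frac{1+\Ckh w'}{\Wkh(w)}.
\]
Both terms on the left have zero mean, so $c\bigl[(1+\Ckh w')/\Wkh(w)\bigr]=0$. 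Now $(1+\Ckh w')/\Wkh(w)=\re(1/F)|_{y=0}$, and since $1/F$ is holomorphic and horizontally $2\pi$-periodic, the mean $[\re(1/F)(\cdot,y)]$ is independent of $y$. On $\{y=-kh\}$ the function $F$ is real with $[\re F(\cdot,-kh)]=[\re F(\cdot,0)]=1$ and $F\neq0$, so $F>0$ there and hence $[1/F(\cdot,-kh)]>0$. Therefore $\bigl[(1+\Ckh w')/\Wkh(w)\bigr]>0$, which forces $c=0$ and completes the converse. Note that this argument, like yours, tacitly requires $F\neq0$ throughout $\overline{\Rkh}$ (not merely on $\{y=0\}$); the paper states this as a requirement rather than deriving it from $\Wkh(w)\neq0$.
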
\begin{proof}We start in a more general setting that, whilst seemingly unrelated to the problem at hand, will turn out to be applicable when considering the equivalence in which we are interested.

For any $w\in C^{2,\alpha}_{2\pi}$, let us define a function $\hat w\in C^{0,\alpha}_{2\pi}$ by \begin{equation}\hat w(t):=\frac{(1+\Ckh w'(t))w''(t)-w'(t)\Ckh w''(t)}{\Wkh(w)(t)}.\label{vdef}\end{equation}Let $A$ be the holomorphic function on $\Rkh$, $2\pi$-periodic in $x$, real-valued on $y=-kh$, with values on the real axis  given by $\Ckh w''+iw''$, and let $B$ be the holomorphic function on $\Rkh$, $2\pi$-periodic in $x$, real-valued on $y=-kh$, with values on the real axis  given by $(1+\Ckh w')+iw'$. Let $C$ be given by $C:=A/B$. Then $C$ is also holomorphic on $\Rkh$, $2\pi$-periodic in $x$, real-valued on $y=-kh$, with values on the real axis given by $$J(t)=\frac{\mathcal{C}_{kh}w''(t)+iw''(t)}{(1+\mathcal{C}_{kh}w'(t))+iw'(t)}.$$ It can then be seen that $\hat w(t)=\imag C(t)$, which implies that $[\hat w]=0$ and $\re C(t)=\Ckh\hat w(t)+K$ for some constant $K$. Indeed, it was shown in \cite{CV} that for a function $Z+iW$ holomorphic on $\Rkh$, real-valued on $y=-kh$, with $W(x,0)=w(x)$, the function $Z$ is $2\pi$-periodic if and only if $[w]=0$. Since $\re C$ is $2\pi$-periodic, it then follows that $[\hat w]=0$, whence it makes sense to define $\Ckh\hat w$ and we conclude that $\re C(t)=\Ckh\hat w(t)+[\re C]$.

But now, since $$\re C(t)=\frac{(1+\Ckh w'(t))\Ckh w''(t)+w'(t)w''(t)}{\Wkh(w)(t)}=\frac{\mathrm{d}}{\mathrm{d}t}\log\left(\Wkh(w)(t)\right)^{1/2},$$it follows that $K=0$, and hence \begin{equation}\Ckh \hat w=\frac{\Ckh w''(1+\Ckh w')+w'w''}{\Wkh(w)}.\label{Cvdef}\end{equation}After some simple algebra, we can then deduce, from \eqref{vdef} and \eqref{Cvdef}, that \begin{equation}w''=w'\Ckh\hat w+(1+\Ckh w')\hat w.\label{w''v}\end{equation}Suppose now that $(m,Q,w)$ is a solution of \eqref{eqn2a}. Note that \eqref{eqn2a} may be written in the form$$\left\{\frac{m}{h}-\frac{\gamma h}{2}+\frac\gamma k\left(\frac{[w^2]}{2kh}+\Ckh(ww')-w-w\Ckh w'\right)\right\}^2\Wkh(w)^{-1/2}=\left(Q-\frac{2gw}k\right)\Wkh(w)^{1/2}+2\sigma k\hat w.$$This then implies, by taking averages and using the fact that $[\hat w]=0$, that\begin{multline}\left[\left\{\frac{m}{h}-\frac{\gamma h}{k}+\frac{\gamma}{k}\left(\frac{[w^2]}{2kh}+\mathcal{C}_{kh}(ww')-w-w\mathcal{C}_{kh}w'\right)\right\}^2\Wkh(w)^{-1/2}\right.\\-\left.\left(Q-\frac{2gw}k\right)^{\phantom{2}}\!\!\!\Wkh(w)^{1/2}\right]=0,\label{vmean0}\end{multline}which yields that $Q$ is given by (\ref{mudef}). Moreover, \eqref{eqn2a} may also be written as \begin{multline}\hat w=\frac{1}{2\sigma k}\left\{\frac{m}{h}-\frac{\gamma h}{2}+\frac{\gamma}k\left(\frac{[w^2]}{2kh}+\mathcal{C}_{kh}(ww')-w-w\mathcal{C}_{kh}w'\right)\right\}^2\Wkh(w)^{-1/2}-\\-\frac{1}{2\sigma k}\left(Q-\frac{2gw}k\right)\Wkh(w)^{1/2}\label{veqn},\end{multline}with $Q$ given by \eqref{mudef}. From this we obtain that\begin{multline}\Ckh \hat w=\Ckh\left(\frac{1}{2\sigma k}\left\{\frac{m}{h}-\frac{\gamma h}{2}+\frac{\gamma}k\left(\frac{[w^2]}{2kh}+\mathcal{C}_{kh}(ww')-w-w\mathcal{C}_{kh}w'\right)\right\}^2\Wkh(w)^{-1/2}-\right.\\\left.-\frac{1}{2\sigma k}\left(Q-\frac{2gw}k\right)\Wkh(w)^{1/2}\right),\label{Cveqn}\end{multline}and the validity of \eqref{2nd} is then a consequence of \eqref{w''v}, \eqref{veqn}, and \eqref{Cveqn}.

Suppose now that $(m,Q,w)$ are such that \eqref{mudef} and \eqref{2nd} are satisfied. Let $\tilde w$ be given by \begin{multline}\tilde w=\frac{1}{2\sigma k}\left\{\frac{m}{h}-\frac{\gamma h}{2}+\frac{\gamma}k\left(\frac{[w^2]}{2kh}+\mathcal{C}_{kh}(ww')-w-w\mathcal{C}_{kh}w'\right)\right\}^2\Wkh(w)^{-1/2}-\\-\frac{1}{2\sigma k}\left(Q-\frac{2gw}k\right)\Wkh(w)^{1/2}\label{tiveqn},\end{multline}with $Q$ given by \eqref{mudef}. Then $[\tilde w]=0$ and \eqref{2nd} takes the form \begin{equation}w''=w'\Ckh\tilde w+(1+\Ckh w')\tilde w.\label{tiw''v}\end{equation}Let $D$ be the holomorphic function on $\Rkh$, $2\pi$-periodic in $x$, real-valued on $y=-kh$, with values on the real axis  given by $\Ckh\tilde w+i\tilde w$, and let $E:=BD$, where $B$ is as defined earlier. Then $E$ is also holomorphic on $\Rkh$, $2\pi$-periodic in $x$, real-valued on $y=-kh$, and its values on the real axis are given by$$E(t)=((1+\Ckh w'(t))+iw'(t))(\Ckh\tilde w(t)+i\tilde w(t)).$$Then$$\re E(t)=(1+\Ckh w'(t))\Ckh\tilde w(t)-w'(t)\tilde w(t),\qquad\imag E(t)=w'(t)\Ckh\tilde w(t)+(1+\Ckh w'(t))\tilde w(t).$$It now follows from \eqref{tiw''v} that there exists a constant $\tilde K$ such that $$\tilde K+\Ckh w''=(1+\Ckh w')\Ckh\tilde w-w'\tilde w.$$Multiplying this equation by $w'$ and multiplying \eqref{tiw''v} by $1+\Ckh w'$ yields \begin{align}w'(\tilde K+\Ckh w'')&=w'(1+\Ckh w')\Ckh\tilde w-w'^2\tilde w,\label{w'2v}\\(1+\Ckh w')w''&=w'(1+\Ckh w')\Ckh\tilde w+(1+\Ckh w')^2\tilde w\label{ckh2v}\end{align}respectively. It follows from \eqref{w'2v} and \eqref{ckh2v} that\begin{equation}\tilde w=\frac{(1+\Ckh w')w''-w'(\tilde K+\Ckh w'')}{\Wkh(w)}.\label{tildew}\end{equation}By an entirely similar argument, it can also be shown that\begin{equation}\Ckh\tilde w=\frac{w'w''+(1+\Ckh w')(\tilde K+\Ckh w'')}{\Wkh(w)}.\label{Ckhtildew}\end{equation}However, since $\Ckh\tilde w$ must have zero mean, it follows that $$\tilde K\left[\frac{1+\Ckh w'}{\Wkh(w)}\right]=0;$$this because, from \eqref{Ckhtildew}, $$\Ckh\tilde w=\frac{w'w''+(1+\Ckh w')\Ckh w''}{\Wkh(w)}+\tilde K\frac{1+\Ckh w'}{\Wkh(w)}$$and we saw earlier that the first term on the right-hand side has zero mean. However, we can show that$$\left[\frac{1+\Ckh w'}{\Wkh(w)}\right]\ne0,$$whence it follows that necessarily $\tilde K=0$. Indeed, consider the holomorphic function $B$, as above. From the general remarks in \S2.1, it is clear that $[\re B(\cdot,y)]$ is constant as a function of $y$ (were it not, it would follow that $\imag B$ is not $2\pi$-periodic in $x$, which is false). Therefore, from its definition, $$[\re B(\cdot,-kh)]=[\re B(\cdot,0)]=1$$and, since we have $B(x,-kh)\in\R$ and require $B\ne0$ on $\overline{\Rkh}$, we obtain that $B(x,-kh)>0$. Then it is trivial to observe that $(1/B)(x,-kh)$ is real-valued and strictly positive, yielding $$\left[\frac1B(\cdot,-kh)\right]>0.$$By an entirely similar argument, one concludes that $$\left[\re\frac1B(\cdot,0)\right]=\left[\re\frac1B(\cdot,-kh)\right];$$observing that the left-hand side is equal to $[(1+\Ckh w')/\Wkh(w)]$ and the right-hand side is equal to $[(1/B)(\cdot,-kh)]$, the claim follows and $\tilde K=0$ indeed. 

In light of \eqref{tiveqn}, it is thus trivial to observe that \eqref{eqn2a} arises immediately from \eqref{tildew}, and the proof is complete.\end{proof}
\section{Global Bifurcation}\label{GB}Let $C^{p,\al}_{2\pi,0,e}$ denote the subspace of $C^{p,\al}_{2\pi,0}$ consisting of even functions.  We are attempting to find solutions $(m,w)$ to \eqref{2nd} in the space $\R\times C^{2,\al}_{2\pi,0,e}$. We first note that $(m,0)$ is a solution for any $m\in\R$. Indeed, it is easy to check -- with the definition for $Q$ given by \eqref{mudef} --  that \eqref{2nd} is also satisfied for any choice of $m\in\R$ when $w\equiv0$. These trivial solutions correspond to laminar flows with a flat free surface.

Let $X=C^{2,\alpha}_{2\pi,0,e}$ be the space of twice H\"older-continuously differentiable, $2\pi$-periodic, even functions of zero mean. Let $\mathcal{U}\subset X$ be the open subset given by $$\mathcal{U}=\{w\in X:w'^2+(1+\Ckh w')^2>0\}.$$
Also, denote by $D$ the differentiation operator $$D:C^{p,\alpha}_{2\pi,0}\to C^{p-1,\alpha}_{2\pi,0}\qquad\text{for }p\ge1,$$ given by $Dw=w'$. In fact, it can be seen that the zero mean condition makes $D$ an invertible operator; we denote its inverse by $D^{-1}$. Notice further that $D^{2}:=D\circ D$ maps even functions to even functions; we denote its inverse by $D^{-2}$.

It is convenient to make the change of parameter given by \begin{equation}\label{mtola}\la=\frac{m}{h}-\frac{h\gamma}2,\end{equation}which is seen to be bijective, and to define an operator $$K:\mathbb{R}\times\mathcal{U}\to C^{1,\alpha}_{2\pi,0,e}$$ via the right-hand side of equation \eqref{2nd}, where \eqref{mudef} and \eqref{mtola} have been taken into account, i.e. \begin{multline}K(\lambda,w)=\frac{w'}{2\sigma k}\Ckh\!\!\left(\left\{\la+\frac{\gamma}{k}\left(\frac{[w^2]}{2kh}+\mathcal{C}_{kh}(ww')-w-w\mathcal{C}_{kh}w'\right)\right\}^2\!\!\Wkh(w)^{-1/2}-\right.\\\left.-\left(Q(h\la+h^2\gamma/2,w)-\frac{2gw}k\right)\Wkh(w)^{1/2}\right)+\\+\frac{1}{2\sigma k}(1+\Ckh w')\!\!\left(\left\{\la+\frac{\gamma}{k}\left(\frac{[w^2]}{2kh}+\mathcal{C}_{kh}(ww')-w-w\mathcal{C}_{kh}w'\right)\right\}^2\!\!\Wkh(w)^{-1/2}\right.\\\left.-\left(Q(h\la+h^2\gamma/2,w)-\frac{2gw}k\right)\Wkh(w)^{1/2}\right).\label{Kdef}\end{multline} We thus see that \eqref{2nd} is reduced simply to $$D^2w=K(\lambda,w).$$
Moreover, by defining $f:=-D^{-2}\circ K$, we can now reformulate \eqref{2nd} -- and thus \eqref{form} -- in the following manner: \\\textit{Given $h,k>0$ and $g,\gamma,\si\in\mathbb{R}$, $\si\ne0$, find pairs $(\lambda,w)\in\mathbb{R}\times\mathcal{U}$ such that\begin{equation}w+f(\lambda,w)=0,\label{gbeqn}\end{equation}}

The standard approach in such problems is to apply the Rabinowitz Global Bifurcation Theorem \cite{R}. This theorem is based upon topological degree theory and has been refined by various authors; for a full account see, e.g., \cite{K}. However, when the mapping in question has a real-analytic structure, an alternative -- and arguably more refined -- theory of global bifurcation is available, first developed by Dancer \cite{D} and subsequently much studied and improved in \cite{BufTol}. When both theories are applicable, as it turns out to be the case in our problem, they yield related, but different conclusions.

We now state these theorems in a form which is suitable for our application. We present first the real-analytic theorem, in a form given in \cite{CSV}, where the authors noticed and corrected a slight error in \cite{BufTol}. Then we present the topological theorem, stated as in \cite{K}. This is followed by a brief explanation of some of the concepts mentioned there.

\begin{thm}Let \label{CSVthm}$X,Y$ be Banach spaces, $\cal{O}\subset\R\times X$ an open set and $F:\cal{O}\to Y$ be a real-analytic mapping. Suppose that \begin{enumerate}[a.]\item$(\la,0)\in\cal{O}$ and $F(\la,0)=0$ for all $\la\in\R$,\item for some $\la^*\in\R$ we have that $\dim\ker\p_xF[\la^*,0]=\codim\im\p_xF[\la^*,0]=1$ and, where we write $\ker\p_xF[\la^*,0]=\spn\{x^*\}$, that $\p^2_{\la x}F[\la^*,0](1,x^*)\notin\im\p_xF[\la^*,0]$,\item$\p_xF[\la,x]$ is a Fredholm operator of index $0$ whenever $F(\la,x)=0$ with $(\la,x)\in\cal{O}$,\item for some sequence $(\cal{Q}_j)_{j\in\mathbb{N}}$ of bounded closed subsets of $\cal{O}$ with $$\cal{O}=\bigcup_{j\in\mathbb{N}}\cal{Q}_j,$$ the set $\{(\la,x)\in\cal{O}:F(\la,x)=0\}\cap\cal{Q}_j$ is compact for each $j\in\mathbb{N}$. \end{enumerate}Then there exists in $\cal{O}$ a (unique up to reparameterisation) continuous curve $\scr{C}=\{(\la(s),x(s)):s\in\R\}$ of solutions to $F(\la,x)=0$ such that \begin{enumerate}\item $(\la(0),x(0))=(\la^*,0)$,\item$x(s)=sx^*+o(s)$ as $s\to0$ for $0<|s|<\eps$ for some $\eps>0$ sufficiently small,\item there is a neighbourhood of $(\la^*,0)$ in $\cal{O}$ and $\eps>0$ sufficiently small such that all solutions to $F(\la,x)=0$ with $x\ne0$ are given by $(\la(s),x(s))$ for $0<|s|<\eps$\item around each point of $\scr{C}$ there is a local real-analytic reparameterisation,\item one of the following alternatives occurs:\begin{enumerate}\item either, for every $j\in\mathbb{N}$ there exists $s_j>0$ such that $(\la(s),x(s))\notin\cal{Q}_j$ for all $|s|>s_j$, or\item there exists $T>0$ such that $(\la(s+T),x(s+T))=(\la(s),x(s))$ for all $s\in\R$.\end{enumerate}\end{enumerate}\end{thm}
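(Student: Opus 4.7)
The plan is to combine a local bifurcation result of Crandall--Rabinowitz type at $(\la^*,0)$ with a global extension procedure that exploits the real-analytic Fredholm structure together with the compactness hypothesis (d). Since $F$ is real-analytic and hypotheses (a), (b), (c) are precisely those of the Crandall--Rabinowitz transversality theorem, I would first produce an analytic local curve of nontrivial solutions through $(\la^*,0)$: the simple kernel/cokernel condition in (b) together with the transversality $\p^2_{\la x}F[\la^*,0](1,x^*)\notin\im\p_xF[\la^*,0]$ yields, via Lyapunov--Schmidt reduction, an analytic parameterisation $s\mapsto(\la(s),x(s))$ with $(\la(0),x(0))=(\la^*,0)$ and $x(s)=sx^*+o(s)$, and moreover every nontrivial zero of $F$ in a small neighbourhood of $(\la^*,0)$ lies on this curve. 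This establishes conclusions 1., 2., 3. and the local part of 4.

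Next I would extend this local branch globally. At any zero $(\la_0,x_0)$ of $F$ where $\p_xF[\la_0,x_0]$ is an isomorphism, the analytic implicit function theorem gives a unique local analytic curve of solutions through that point. At a \emph{singular} zero, where the Fredholm-of-index-zero operator $\p_xF[\la_0,x_0]$ has nontrivial kernel, one performs a Lyapunov--Schmidt reduction: because $F$ is analytic and $\p_xF$ is Fredholm of index $0$ along the zero set by (c), the reduction is to a finite system of analytic equations in finitely many variables, and Weierstrass preparation together with Puiseux expansion shows that the zero set of $F$ near $(\la_0,x_0)$ is a finite union of analytic arcs. This lets one continue the branch through singular points in a canonical way, up to finitely many combinatorial choices which are eliminated by fixing a direction of travel along the curve. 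The resulting maximal continuation $\scr{C}$ is analytic after local reparameterisation, giving 4.

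Finally I would derive the dichotomy in 5. The compactness condition (d) guarantees that the zero set of $F$ intersected with any $\cal{Q}_j$ is compact and, via the Fredholm index-zero property, contains only finitely many singular points in each $\cal{Q}_j$. Suppose alternative (a) fails: then there exists some $j_0$ and a sequence $s_n\to\infty$ (say) with $(\la(s_n),x(s_n))\in\cal{Q}_{j_0}$. Along a subsequence, these points converge to some zero $(\la^\sharp,x^\sharp)\in\cal{Q}_{j_0}$, and the local structure of the zero set there (either a single analytic arc or finitely many arcs) forces the tail of $\scr{C}$ to reconnect to a point already traversed. Using uniqueness of analytic continuation of the curve, one then shows that $\scr{C}$ is periodic, establishing alternative (b).

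The main obstacle will be the analysis at singular points: one must prove both that the zero set there is a finite union of analytic arcs and that the branch admits a well-defined analytic continuation through them, since otherwise branching could destroy the uniqueness and parameterisation claims. This is precisely the delicate content of the Dancer--Buffoni--Toland framework, and the cited correction in \cite{CSV} addresses a subtlety in exactly this step; I would follow that treatment to ensure the continuation is unambiguous and the global dichotomy is genuinely exhaustive.
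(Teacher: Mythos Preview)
The paper does not prove this theorem at all. Theorem \ref{CSVthm} is \emph{stated} as a known abstract result from the literature---it is the real-analytic global bifurcation theorem of Dancer, as refined by Buffoni and Toland in \cite{BufTol} and with a correction noted in \cite{CSV}---and is then \emph{applied} in the proof of Theorem \ref{mainthm}. There is therefore no ``paper's own proof'' to compare against.

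Your sketch is a reasonable outline of the Dancer--Buffoni--Toland argument: local Crandall--Rabinowitz bifurcation via Lyapunov--Schmidt, analytic continuation through regular points by the implicit function theorem, continuation through singular points using Weierstrass preparation and Puiseux series on the finite-dimensional reduced equation, and the compactness hypothesis (d) to force the unbounded-or-periodic dichotomy. This is indeed the architecture of the proof in \cite{BufTol}. That said, as a self-contained proof your outline has real gaps at the hardest step: the claim that the maximal analytic continuation through singular points is well-defined and unique (conclusion 4 and the uniqueness in the theorem statement) requires a careful notion of ``distinguished arc'' and a structure theorem for analytic varieties in Banach spaces, not merely the observation that locally the zero set is a finite union of arcs. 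Similarly, your argument for alternative 5(b) (``the tail of $\scr{C}$ reconnects, hence periodic'') is where the error in \cite{BufTol} corrected by \cite{CSV} lies, and you have not indicated what the correction actually is or how it works. If you intend to prove the theorem rather than cite it, those two points need substantially more detail; if you intend to cite it, then no proof is required and you should simply reference \cite{BufTol,CSV,D} as the paper does.
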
\begin{thm}Let\label{Rabthm} $X$ be a Banach space and $\mathcal{O}$ an open subset of $\mathbb{R}\times X$. Assume that $F\in C^1(\mathcal{O},X)$ is given by $F(\lambda,x)=x+f(\lambda,x)$, where $f:\mathcal{O}\to X$ is a completely continuous operator. Suppose further that $F(\lambda,0)=0$ for every $\lambda\in\mathbb{R}$ s.t. $(\lambda,0)\in\mathcal{O}$. Denote the closure of set of `non-trivial' solutions to $F(\lambda,x)=0$ by $\mathcal{S}:=\overline{\{(\lambda,x)\in\mathcal{O}:F(\lambda,x)=0,\,x\ne0\}}$. Assume that $\partial_xF[\lambda,0]$ has odd crossing number at $\lambda=\lambda_0$. Then $(\lambda_0,0)\in\mathcal{S}$ and, letting $\mathcal{C}$ be the connected component of $\mathcal{S}$ to which $(\lambda_0,0)$ belongs, we have the following alternatives: either \begin{enumerate}[$(i)$]\item $\mathcal{C}$ is unbounded in $\mathcal{O}$,\item $\mathcal{C}$ intersects the boundary of $\mathcal{O}$, or \item $\mathcal{C}$ contains some $(\lambda_1,0)\in\mathcal{O}$, with $\lambda_0\ne\lambda_1$.\end{enumerate}\end{thm}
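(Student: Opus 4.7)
The plan is to prove this via Leray--Schauder degree theory in the classical Rabinowitz style. Since $f$ is completely continuous, $F(\lambda,\cdot)=I+f(\lambda,\cdot)$ is a compact perturbation of the identity, so for any bounded open $U\subset X$ whose closure contains no solution on $\partial U$ the Leray--Schauder degree $d(\lambda,U):=\deg(F(\lambda,\cdot),U,0)$ is well defined and enjoys homotopy invariance in $\lambda$. The odd crossing number hypothesis at $\lambda_0$ is the assertion that the Leray--Schauder index of $\partial_xF[\lambda,0]$ at the origin changes sign as $\lambda$ passes through $\lambda_0$, so the main content of the theorem is to turn this local index jump into a global statement about the connected component $\mathcal{C}\subset\mathcal{S}$.

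The first step is the local bifurcation assertion $(\lambda_0,0)\in\mathcal{S}$. Choose $\varepsilon>0$ small enough that $\partial_xF[\lambda_\pm,0]$ is invertible at $\lambda_\pm=\lambda_0\pm\varepsilon$ and fix a small ball $B_\delta\subset X$ around $0$ on which the only solutions to $F(\lambda_\pm,\cdot)=0$ is $x=0$. By the odd crossing assumption, $d(\lambda_-,B_\delta)\ne d(\lambda_+,B_\delta)$. If there were no non-trivial solutions in $[\lambda_-,\lambda_+]\times B_\delta$, then $F(\lambda,\cdot)=0$ would have no zeroes on $\partial B_\delta$ for any $\lambda$ in this interval, and homotopy invariance would force equality of the two degrees, a contradiction; hence $(\lambda_0,0)$ is a limit point of non-trivial solutions.

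The global step is a proof by contradiction: assume that all three alternatives $(i)$--$(iii)$ fail, so $\mathcal{C}$ is bounded in $\mathcal{O}$, stays a positive distance from $\partial\mathcal{O}$, and meets the trivial branch only at $(\lambda_0,0)$. Because $f$ is completely continuous and $\mathcal{C}$ is bounded and separated from $\partial\mathcal{O}$, the set $\mathcal{C}$ is compact. Invoke the Whyburn/Kuratowski separation lemma on compact metric spaces: there exists a bounded open neighbourhood $\mathcal{V}$ of $\mathcal{C}$ with $\overline{\mathcal{V}}\subset\mathcal{O}$, $\overline{\mathcal{V}}\cap\mathcal{S}=\mathcal{C}$, and such that $\mathcal{V}$ meets the trivial branch only in an interval $(\lambda_0-\rho,\lambda_0+\rho)\times\{0\}$ for some small $\rho>0$. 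For each $\lambda$ let $\mathcal{V}_\lambda=\{x:(\lambda,x)\in\mathcal{V}\}$; by construction $F(\lambda,\cdot)$ has no zeroes on $\partial\mathcal{V}_\lambda$ for any $\lambda$ for which $\mathcal{V}_\lambda$ is non-empty, so $\lambda\mapsto\deg(F(\lambda,\cdot),\mathcal{V}_\lambda,0)$ is locally constant, and in fact constant on the whole range of $\lambda$-values reached by $\mathcal{V}$ (using that $\mathcal{V}$ is open and the slices vary continuously in the relevant sense). For $|\lambda-\lambda_0|$ slightly larger than $\rho$, the only solution in $\mathcal{V}_\lambda$ is the trivial one, so the degree there equals the Leray--Schauder index of $\partial_xF[\lambda,0]$. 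Comparing the two sides of $\lambda_0$ and invoking the odd crossing hypothesis yields unequal values, contradicting the constancy and completing the proof.

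The hard part is the global topological step, namely the construction of the isolating neighbourhood $\mathcal{V}$ with the two simultaneous properties $\overline{\mathcal{V}}\cap\mathcal{S}=\mathcal{C}$ and $\mathcal{V}\cap(\mathbb{R}\times\{0\})$ a single interval about $\lambda_0$. This relies on the Whyburn lemma: if $\mathcal{C}$ is a connected component of a compact Hausdorff space $\mathcal{S}$, then for any open $U\supset\mathcal{C}$ there is a relatively open-closed subset of $\mathcal{S}$ containing $\mathcal{C}$ and contained in $U$. Complete continuity of $f$ is essential here to guarantee compactness of $\mathcal{C}$ once it is assumed bounded and away from $\partial\mathcal{O}$, and the isolation of the trivial branch near $\lambda_0$ within $\mathcal{C}$ is ensured by the non-degeneracy of $\partial_xF[\lambda,0]$ for $\lambda$ near but not equal to $\lambda_0$, which follows from the odd crossing hypothesis.
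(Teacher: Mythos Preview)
The paper does not supply a proof of this statement: Theorem~\ref{Rabthm} is quoted as a known abstract result from Kielh\"ofer \cite{K} (originating with Rabinowitz \cite{R}) and is then \emph{applied} in the proof of Theorem~\ref{mainthm}. There is thus no in-paper argument to compare against; your sketch is the standard Leray--Schauder degree plus Whyburn-lemma proof that appears in those references, and its overall architecture is correct.

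There is, however, a genuine slip in your final degree comparison. You assert that ``for $|\lambda-\lambda_0|$ slightly larger than $\rho$, the only solution in $\mathcal{V}_\lambda$ is the trivial one'', but by your own construction $\mathcal{V}$ meets the trivial line only on $(\lambda_0-\rho,\lambda_0+\rho)\times\{0\}$, so for $|\lambda-\lambda_0|>\rho$ the point $0$ does \emph{not} lie in $\mathcal{V}_\lambda$, while nontrivial points of $\mathcal{C}$ may well do. Reading ``larger'' as ``smaller'' does not rescue the step either: for $\lambda$ near $\lambda_0$ the slice $\mathcal{V}_\lambda$ necessarily contains nontrivial solutions coming from the bifurcating branch, so $\deg(F(\lambda,\cdot),\mathcal{V}_\lambda,0)$ is not simply the local index $i(\lambda)$ at $0$. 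The correct endgame is: (i) observe that $\deg(F(\lambda,\cdot),\mathcal{V}_\lambda,0)\equiv 0$ since it vanishes once $|\lambda|$ exceeds the bounded $\lambda$-projection of $\overline{\mathcal{V}}$; (ii) at $\lambda=\lambda_0\pm\varepsilon$ use additivity to split off a small ball $B_\delta$ about $0$, giving $0=i(\lambda_0\pm\varepsilon)+\deg(F(\lambda_0\pm\varepsilon,\cdot),\mathcal{V}_{\lambda_0\pm\varepsilon}\setminus\overline{B_\delta},0)$; (iii) argue, via a suitably chosen isolating neighbourhood (this is where the construction in \cite{K,R} is more delicate than your sketch indicates, since the bifurcating branch may cross $\partial B_\delta$ as $\lambda$ traverses $\lambda_0$), that the second summand agrees on the two sides. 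This forces $i(\lambda_0-)=i(\lambda_0+)$, contradicting the odd crossing number. Your outline has the right ingredients but step (iii) is not addressed and the statement you wrote in its place is inconsistent with your own setup.
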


We now give a brief explanation of what it means in this context for $\partial_xF[\lambda,0]$ to have odd crossing number at a particular value of the parameter, and refer the reader to \cite[\S II.3]{K} for a more comprehensive explanation.

In essence, the crossing number counts the number of eigenvalues near zero that cross from the left complex half-plane to the right as $\la$ passes through $\la_0$.  However, we can be more precise. Let $\la_0\in\R$ such that $\p_xF[\la_0,0]$ has $0$ as an isolated eigenvalue of finite (algebraic) multiplicity. By Leray-Schauder Theory, for $\la$ in a neighbourhood of $\la_0$  the eigenvalue $0$ of $\p_xF[\la_0,0]$ perturbs to some (finite) number of eigenvalues of $\p_xF[\la,0]$, each of which tends to $0$ as $\la\to\la_0$. The sum of the algebraic multiplicities of these eigenvalues is equal to the algebraic multiplicity of the eigenvalue $0$ of $\p_xF[\la_0,0]$. Considering only these perturbed eigenvalues, we count (with multiplicity) the number of real and negative such eigenvalues for $\la<\la_0$ and $\la>\la_0$ (in a neighbourhood of $\la_0$).  Since $0$ is an isolated eigenvalue, this number is constant on each of the intervals $(\la_0-\delta,\la_0)$, $(\la_0,\la_0+\delta)$ for some $\delta>0$ such that $\p_xF[\la,0]$ is regular on those intervals. Then the operator $\p_xF[\la,0]$ is said to have odd crossing number at $\la=\la_0$ if this number changes by an odd amount as $\la$ passes from $\la<\la_0$ to $\la>\la_0$.

 We are now is a position to state our main result.

\begin{thm} Let \label{mainthm}$h,k>0$ and $g,\gamma,\sigma\in\mathbb{R}$, $\si\ne0$, be fixed. For any $m\in\mathbb{R}$ there exist laminar flows with a flat free surface in water of depth $h$, with constant vorticity $\gamma$ and relative mass flux given by $m$. Away from the critical values for $m$ given by \begin{equation}m_\pm(n)=\frac{h^2\gamma}{2}-\frac{h\gamma\tanh(nkh)}{2kn}\pm h\sqrt{\frac{\gamma^2\tanh^2(nkh)}{4k^2n^2}+\frac{k^2n^2\sigma+g}{kn}\tanh(nkh)}.\label{ladef}\end{equation}all flows close to the laminar flow with relative mass flux $m$ are also laminar.

Let $m^*\in\{m_\pm(n):n\in\mathbb{N}\}$. For each such $m^*$ there exists a space $X^*\subset C^{2,\al}_{2\pi,0,e}$ and a (unique up to reparameterisation) continuous curve in $\R\times X^*$, given by $$\scr{C}_{m^*}:=\{(m_s,w_s):s\in\R\},$$of solutions to \eqref{2nd} such that $\Wkh(w_s)(t)\ne0$ for all $s,t\in\R$, and \begin{enumerate}\item[\emph{(i)}]$(m_0,w_0)=(m^*,0)$,\item[\emph{(ii)}]$w_s(t)=s\cos{nt}+o(s)$ as $s\to0$ for $0<|s|<\eps$ for some $\eps>0$ sufficiently small, where $n\in\mathbb{N}$ is such that $m^*=m_\pm(n)$ for some choice of $\pm$,\item[\emph{(iii)}]there is a neighbourhood of $(m^*,0)$ in $\R\times X^*$ and $\eps>0$ sufficiently small such that all solutions to \eqref{2nd} in that neighbourhood with $w\ne0$ are given by $(m_s,w_s)$ for $0<|s|<\eps$,\item[\emph{(iv)}]around each point of $\scr{C}_{m^*}$ there is a local real-analytic reparameterisation,\item[\emph{(v)}]one of the following occurs: either \begin{enumerate}\item we have that $$\min\left\{\frac{1}{1+\|m_s,w_s\|_{\R\times C^{2,\al}_{2\pi,0,e}}},\min_{t\in\R}\Wkh(w_s)(t)\right\}\to0\quad\text{as }s\to\pm\infty;$$or\item there exists $T>0$ such that $(m_{s+T},w_{s+T})=(m_s,w_s)$ for every $s\in\R$.\end{enumerate}\end{enumerate} Moreover, let $\cal{C}_{m^*}$ be the connected component of the set $$\overline{\{(m,w)\in\R\times X^*:(m,w)\text{ satisfy \eqref{2nd},}w\not\equiv0\}}$$ that contains the point $(m^*,0)$ (and thus the curve $\scr{C}_{m^*}$). Either \begin{enumerate}\item[\emph{(I)}]$\cal{C}_{m^*}$ is unbounded, or\item[\emph{(II)}]there exists a sequence of points $(m_n,w_n)\in\cal{C}_{m^*}$ such that either\begin{enumerate}[(A)]\item we have that $$\lim_{n\to\infty}\min_{t\in\R}\Wkh(w_n)(t)=0;$$or\item there exists $m'\in\R$ with $m'\ne m^*$ such that $(m_n,w_n)\to(m',0)$ as $n\to\infty$.\end{enumerate}\end{enumerate}\end{thm}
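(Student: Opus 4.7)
The plan is to cast equation \eqref{2nd} in the abstract form $F(\la,w):=w+f(\la,w)=0$, with $f=-D^{-2}\circ K$ acting on $\mathcal{O}:=\R\times(\mathcal{U}\cap X^*)$, where $X^*\subset C^{2,\al}_{2\pi,0,e}$ consists of those functions which, for a chosen $n\in\mathbb{N}$ with $m^*=m_\pm(n)$, have period $2\pi/n$. The crucial calculation is the linearisation of $K$ at $w=0$: there $\Wkh(0)=1$, both bracketed expressions in \eqref{Kdef} reduce to $\la-\la=0$, and $Q(h\la+h^2\gamma/2,0)=\la^2$; differentiating \eqref{Kdef} in direction $v$ then yields
\begin{equation*}
\p_wK[\la,0]v=\frac{g-\la\gamma}{\sigma k^2}\,v-\frac{\la^2}{\sigma k}\,\Ckh v',
\end{equation*}
so the linearised equation $D^2v=\p_wK[\la,0]v$ diagonalises in the Fourier basis. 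Inserting $v=\cos(nt)$ and using $\Ckh\sin(nt)=-\coth(nkh)\cos(nt)$ produces the dispersion relation
\begin{equation*}
\la^2 kn\coth(nkh)+\la\gamma-(g+\sigma k^2n^2)=0,
\end{equation*}
whose two roots in $\la$ translate, via $\la=m/h-h\gamma/2$, to $m=m_+(n)$ and $m=m_-(n)$. Outside of these values the linearised operator is invertible on $X^*$, so the implicit function theorem produces only laminar solutions locally, giving the first assertion of the theorem.

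Next I would verify the hypotheses of both bifurcation theorems. Since $K$ takes values in $C^{1,\al}_{2\pi,0,e}$ and $D^{-2}$ maps this into $C^{3,\al}_{2\pi,0,e}$, which embeds compactly into $C^{2,\al}_{2\pi,0,e}$, the operator $f$ is completely continuous, so $\p_wF[\la,w]=I+\p_wf[\la,w]$ is Fredholm of index zero. Real-analyticity of $F$ on $\mathcal{O}$ follows from $\Wkh>0$ on $\mathcal{U}$ together with real-analyticity of $\Ckh$, arithmetic operations and composition. The transversality condition $\p^2_{\la w}F[\la^*,0](1,x^*)\notin\im\p_wF[\la^*,0]$ reduces, after computing $\p_\la\p_wK[\la^*,0]\cos(nt)$, to $\gamma+2\la^*kn\coth(nkh)\ne0$; this is precisely the non-vanishing of the discriminant in \eqref{ladef}, because the forbidden value $\la=-\gamma\tanh(nkh)/(2kn)$ is the midpoint of the two roots of the dispersion relation, attained only when they coincide. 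For the exhaustion hypothesis (d) of Theorem \ref{CSVthm}, take
\begin{equation*}
\mathcal{Q}_j:=\left\{(\la,w)\in\mathcal{O}:|\la|+\|w\|_{C^{2,\al}}\le j,\,\min_{t\in\R}\Wkh(w)(t)\ge 1/j\right\};
\end{equation*}
this exhausts $\mathcal{O}$, and compactness of the solution set in each $\mathcal{Q}_j$ follows from the identity $w=-f(\la,w)$ together with the complete continuity of $f$.

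Theorem \ref{CSVthm} then delivers the local-analytic curve $\scr{C}_{m^*}$ with properties (i)–(iv), together with the dichotomy (v): either the curve eventually leaves every $\mathcal{Q}_j$ — which upon unpacking the definition of $\mathcal{Q}_j$ is precisely alternative (v)(a), namely blow-up of the norm or $\min_t\Wkh(w_s)(t)\to0$ — or it closes into a loop, giving (v)(b). For the global continuum $\mathcal{C}_{m^*}$, the same transversality argument shows that as $\la$ crosses $\la^*$ a single real eigenvalue of $\p_wF[\la,0]$ crosses zero transversally, so the crossing number is $\pm 1$, in particular odd; Theorem \ref{Rabthm} then yields the three alternatives, with the middle one ``$\mathcal{C}_{m^*}$ intersects $\p\mathcal{O}$'' being exactly (II)(A), since inside $\R\times C^{2,\al}_{2\pi,0,e}$ the boundary of $\mathcal{O}$ is the set where $\Wkh(w)$ vanishes somewhere. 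The main obstacle will be handling the compactness bookkeeping carefully: proving that the sole ways a solution sequence can escape $\mathcal{O}$ are by norm-blow-up or degeneration of $\Wkh$, so that the abstract ``leaves every $\mathcal{Q}_j$'' alternative produces exactly the concrete pair in (v)(a) and in (II)(A), and verifying that the continuum does not include pathological accumulation that the abstract theorems do not a priori rule out.
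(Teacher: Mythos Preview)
Your proposal is correct and follows essentially the same route as the paper: the same reformulation $F(\la,w)=w-D^{-2}K(\la,w)$, the same linearisation and dispersion relation, compactness of $f$ via the gain of two derivatives from $D^{-2}$ and the compact embedding $C^{3,\al}\hookrightarrow C^{2,\al}$, the same transversality condition $\gamma+2\la^*kn\coth(nkh)\ne0$, the same exhaustion sets $\mathcal{Q}_j$, and the same odd-crossing-number argument. The only point where the paper is slightly more careful is the choice of $X^*$ in the two-dimensional-kernel case: one must take $n=\max\{n_1,n_2\}$ (not an arbitrary choice) to guarantee that restricting to $2\pi/n$-periodic functions kills the other kernel direction.
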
\begin{rem} The conclusions of this theorem only hold when the kernel of the associated linearised operator has dimension 1; this is the reason for restricting to the subspace $X^*$. In the case where there exists exactly one choice of $\pm$ and exactly one $n\in\mathbb{N}$ such that $m^*=m_\pm(n)$, then one may take $X^*=C^{2,\al}_{2\pi,0,e}$.  It should be noted that for many choices of the parameters we can take $X^*=C^{2,\al}_{2\pi,0,e}$ for every $m^*$ (for example, it was shown in \cite{M} that if $\frac{\sigma}{gh^2}>\frac{\gamma^2h}{6g}+\frac13+\frac{|\gamma|}{6g}\sqrt{\gamma^2h^2+4gh}$ then the kernel is one-dimensional at every bifurcation point).\end{rem}

\begin{proof}[Proof of Theorem \ref{mainthm}] We first note that, due to the equivalences of (all) the formulations, it is sufficient to consider solutions to \eqref{gbeqn}.

The first statement of the theorem is then simply a formal statement of the clear fact that $w=0$ is a solution for any choice of $\lambda\in\mathbb{R}$. We therefore prove only the non-existence of bifurcation and seek to verify the conditions of Theorem \ref{CSVthm} and subsequently Theorem \ref{Rabthm}, in order to prove the remainder of the theorem as stated.

To begin, write $F(\lambda,w)=w+f(\lambda,w)$.  It is clear that $F\in C^1(\mathbb{R}\times\mathcal{U},X)$ and that $F(\lambda,0)=0$ for every $\lambda\in\mathbb{R}$. In fact, it is simple to observe that $F:\R\times\cal{U}\to X$ is a real-analytic mapping. It follows from the Implicit Function Theorem that for every $\lambda$ such that $\partial_wF[\lambda,0]$ is an invertible operator, there exists a neighbourhood of $(\lambda,0)$ in $\mathbb{R}\times\mathcal{U}$ such that in that neighbourhood all solutions of $F(\lambda,w)=0$ are the trivial solutions, with $w=0$. We therefore must find these values for $\lambda$. In fact, it turns out to be simpler to find the values for which $\partial_wF[\lambda,0]$ is \textit{not} invertible, this being a discrete set, and concluding invertibility -- and thus no bifurcation -- for all other values.

First let us compute $\partial_wF[\lambda,0]$. We observe that $\partial_wF[\lambda,0]=I+\partial_wf[\lambda,0]$, where $I$ denotes the identity operator, and so the difficulty will be computing $\partial_wf[\lambda,0]$. Since $D^{-2}$ is a linear operator, it is easy to see from the definition of $f$ that $$\partial_wf[\lambda,0]=-D^{-2}\circ\partial_wK[\lambda,0]$$and so we concentrate on computing $\partial_wK[\lambda,0]$.

We have, after a straightforward calculation, that \begin{equation}\partial_wK[\lambda,0]w=\frac{1}{2\sigma k}\left(\frac2k(g-\gamma\lambda)w-2\lambda^2\Ckh(w')\right)=\frac{g-\gamma\lambda}{\sigma k^2}w-\frac{\lambda^2}{\sigma k}\Ckh(w').\label{Kdiff}\end{equation}From this we thus conclude that \begin{equation}\partial_wf[\lambda,0]w=\frac{\gamma\lambda-g}{\sigma k^2}D^{-2}w+\frac{\lambda^2}{\sigma k}D^{-2}\Ckh Dw.\label{fdiff}\end{equation}Recalling (cf. \cite{CV}) that $\Ckh$ commutes with $D$, in view of \eqref{fdiff} we obtain \begin{equation}\partial_wF[\lambda,0]=I+\frac{\gamma\lambda-g}{\sigma k^2}D^{-2}+\frac{\lambda^2}{\sigma k}\Ckh D^{-1}.\label{Fdiff}\end{equation}Using a Fourier expansion to write $$w(t)=\sum_{n=1}^\infty a_n\cos{nt}$$and applying the following representations\begin{align*}D^{-1}:&\cos{nt}\mapsto\frac1n\sin{nt}&\Ckh:&\cos{nt}\mapsto\coth(nkh)\sin{nt}\\D^{-1}:&\sin{nt}\mapsto-\frac1n\cos{nt}&\Ckh:&\sin{nt}\mapsto-\coth(nkh)\cos{nt}\end{align*}for the linear operators $D^{-1},\Ckh$, we arrive from \eqref{Fdiff} at \begin{equation}\partial_wF[\lambda,0]w=\sum_{n=1}^\infty\left(1+\frac{g-\gamma\lambda}{k^2n^2\sigma}-\frac{\lambda^2\coth(nkh)}{kn\sigma}\right)a_n\cos{nt}.\label{FdiffF}\end{equation}From \eqref{FdiffF} it is clear that $\partial_wF[\lambda,0]$ is not invertible whenever \begin{equation}k^2n^2\sigma+g-\gamma\lambda-\lambda^2kn\coth(nkh)=0\label{pbpt}\end{equation}for some $n\in\mathbb{N}$. Observe at this point, that \eqref{pbpt} is identical to the condition derived in \cite{M}, as we would expect due to the equivalence between \eqref{next} and \eqref{gbeqn}. Solving \eqref{pbpt} for $\lambda$ we obtain \begin{equation}\lambda_\pm(n)=-\frac{\gamma\tanh(nkh)}{2kn}\pm\sqrt{\frac{\gamma^2\tanh^2(nkh)}{4k^2n^2}+\frac{k^2n^2\sigma+g}{kn}\tanh(nkh)}.\label{lavals}\end{equation}In view of \eqref{mtola}, we see that these values of $\lambda$ correspond exactly to the values for $m_\pm(n)$ given by \eqref{ladef}.

As remarked previously, for $\lambda\ne\lambda_\pm(n)$ for every $n\in\mathbb{N}$ we have via the Implicit Function Theorem that all solutions in a neighbourhood of $(\lambda,0)$ are those on the trivial curve, and the statement in the theorem follows.

Let $m^*$ be as given in the theorem and write $\la^*=m^*/h-h\gamma/2$. The argument presented in \cite{M} gives that there exists at most two distinct natural numbers $n_1,n_2\in\mathbb{N}$ such that $$m^*=m_\pm(n_1)=m_\pm(n_2),$$for some choice of $\pm$ in each case. If there exists exactly one $n\in\mathbb{N}$ such that $m^*=m_\pm(n)$ for some $\pm$, then we may set $X^*:=X$. This corresponds to some $\la^*\in\{\la_\pm(n):n\in\mathbb{N}\}$ such that there exists exactly one choice of $\pm$ and exactly one $n\in\mathbb{N}$ giving $\la^*=\la_\pm(n)$. However, if there do exist two distinct $n_1,n_2\in\mathbb{N}$, set $n=\max\{n_1,n_2\}$ and $$X^*:=X\cap\spn\{t\mapsto\cos{jnt}:j\in\mathbb{N}\}.$$In view of \eqref{pbpt} and \eqref{lavals}, it then follows that, when restricted to $X^*$, $\ker\p_wF[\la^*,0]=\spn\{t\mapsto\cos{nt}\}$ and therefore that $\dim\ker\p_wF[\la^*,0]=1$.

Setting $\cal{U}^*=\cal{U}\cap X^*$ and $\mathcal{O}=\mathbb{R}\times\mathcal{U}^*$, it should be clear that the final part of this theorem will follow from Theorems \ref{CSVthm} and \ref{Rabthm}, provided we can verify all of the requirements of these theorems.

We first prove that the mapping $f:\R\times\cal{U}\to X$ is compact. Indeed, observe that, as can be seen from its definition, we have $K:\R\times\cal{U}\to C^{1,\alpha}_{2\pi,0,e}$ and also that $D^{-2}:C^{p,\alpha}_{2\pi,0,e}\to C^{p+2,\alpha}_{2\pi,0,e}$ for $p\ge0$. Thus we obtain \begin{equation}f=-D^{-2}\circ K:\R\times\mathcal{U}\to C^{3,\alpha}_{2\pi,0,e}.\label{extrareg}\end{equation}But we only demand $f$ to have values in $X=C^{2,\alpha}_{2\pi,0,e}$. Recalling that $C^{q,\beta}$ is compactly embedded in $C^{p,\alpha}$ for any $q>p$ or $\beta>\alpha$ we see that $\mathrm{im}f$ is a compact subset of $X$ and so the mapping $f:\R\times\cal{U}\to X$ is indeed compact.

Also, since $f$ is a compact mapping, it follows that $\p_wf[\la,w]$ is a compact operator and that $\p_wF[\la,w]=I+\p_wf[\la,w]$ is a compact perturbation of the identity. Then, by dint of the Fredholm alternative \cite{BufTol}, we have that $\p_wF[\la,w]$ is a Fredholm operator of index $0$.

It is clear that the satisfaction of assumptions \emph{a.} and \emph{c.} of Theorem \ref{CSVthm} has been demonstrated. Since we have already established that $\dim\ker\p_wF[\la^*,0]=1$ and that the Fredholm property holds (giving $\codim\im\p_wF[\la^*,0]=1$ as required), to verify \emph{b.} it simply remains to show that $$\p^2_{\la w}F[\la^*,0](1,w^*)\notin\im\p_wF[\la^*,0],$$where $w^*(t)=\cos{nt}$. However, we can see from \eqref{FdiffF} that $$\p^2_{\la,w}[\la_0,0](1,w^*)=\left(\frac{-\gamma}{k^2n^2\sigma}-\frac{2\la^*\coth(nkh)}{kn\si}\right)\cos{nt},$$which will not be an element of $\im\p_wF[\la^*,0]$ provided that \begin{equation}\la^*\ne\frac{-\gamma\tanh(nkh)}{2kn};\label{ndr}\end{equation}or, equivalently, that $\la_+(n)\ne\la_-(n)$, i.e. $\la^*$ is not a double root of \eqref{pbpt}.  This is a mild restriction -- under the assumption\footnote{It can be seen that this assumption is not unreasonable, since $g$ and $\si$ are prescribed constants of gravitational acceleration and surface tension, respectively.} that $g,\si>0$ we have $\la_-(n)<0<\la_+(n)$ for every $n\in\mathbb{N}$ -- and so we assume that \eqref{ndr} holds.

Now let us prove that bounded, closed subsets of $\{(\la,w)\in\R\times\cal{U}:F(\la,w)=0\}$ are compact. Let $S\subset\{(\la,w)\in\R\times\cal{U}:F(\la,w)=0\}$ be bounded and closed. Define the sets $$S_1:=\{\la\in\R:(\la,w)\in S\text{ for some }w\in\cal{U}\},\qquad S_2:=\{w\in\cal{U}:(\la,w)\in S\text{ for some }\la\in\R\},$$which are readily seen to be themselves closed and bounded. As a closed bounded subset of $\R$, $S_1$ is compact. Since $f$ is a compact mapping and $S$ bounded, it follows that $\{-f(\la,w):(\la,w)\in S\}$ is a relatively compact set. But for $(\la,w)\in S$, we have that $-f(\la,w)=w$ and so $\{-f(\la,w):(\la,w)\in S\}$ is in fact exactly $S_2$. Since $S_2$ is closed it is therefore compact. As a closed bounded subset of the compact set $S_1\times S_2$, it follows that $S$ is compact.

Also, assumption \emph{d.} of Theorem \ref{CSVthm} is corroborated: we see that we may take any sequence $(\cal{Q}_j)_{j\in\mathbb{N}}$ of closed bounded sets, but in particular we may take \begin{equation}\cal{Q}_j=\left\{(\la,w)\in\cal{O}:\|\la,w\|_{\R\times X}\le j,\;\min_{t\in\R}\Wkh(w)(t)\ge\frac1j\right\},\label{calQdef}\end{equation}which can be trivially seen to satisfy the necessary conditions.

Hence, we may apply Theorem \ref{CSVthm} to obtain the curve $\scr{C}_{m^*}$, with each of its properties (i)-(v) corresponding to the properties \emph{1.}-\emph{5.} in Theorem \ref{CSVthm}. We obtain the property (v)\emph{(a)} by using the particular form for $\cal{Q}_j$ given by \eqref{calQdef}.

 We now ratify the assumptions of Theorem \ref{Rabthm}, in order to obtain the final part of this theorem. We have already shown that $f$ is compact, and thus completely continuous; it therefore only remains to prove that $\p_wF[\la,0]:X^*\to X^*$ has odd crossing number at the point $\la^*$ where, recalling our careful choice for $X^*$, we have that $$\dim\ker\p_wF[\la^*,0]=1.$$From the start of the proof we have that $\ker\partial_wF[\lambda^*,0]$ is generated by $w^*(t):=\cos{nt}$ and also that $w^*\notin\mathrm{im}\,\partial_wF[\lambda^*,0]$. Therefore $0$ is a simple eigenvalue of $\partial_wF[\lambda^*,0]$. Starting from the definitions in \cite{K}, it is clear that the linear operator $\partial_wF[\lambda,0]$ has odd crossing number at $\lambda=\lambda^*$ if the number of negative eigenvalues (counted with multiplicities) of $\partial_wF[\lambda,0]$ that tend to $0$ as $\la\to\la^*$ changes by an odd number as $\lambda$ passes from the interval $(\lambda^*-\delta,\lambda^*)$ to the interval $(\lambda^*,\lambda^*+\delta)$, for some $\delta>0$ such this number is constant on each interval. In our situation, since $0$ is a simple eigenvalue we have that there is exactly one such eigenvalue (with multiplicity 1) for any $\lambda$ sufficiently near $\lambda^*$. Therefore, to have an odd crossing number at $\lambda^*$ we need this eigenvalue to change sign as $\lambda$ passes from $\lambda<\lambda^*$ to $\lambda>\lambda^*$. It suffices to calculate the value of this eigenvalue of $\partial_wF[\lambda,0]$ and show that this changes sign. However, we can see from \eqref{FdiffF} that $\partial_wF[\lambda,0]$ has $w^*$ as an eigenvector, with eigenvalue $$1+\frac{g-\gamma\lambda}{k^2n^2\sigma}-\frac{\lambda^2\coth(nkh)}{kn\sigma},$$which can trivially be seen to converge to $0$ as $\la\to\la^*$. Hence, we have that this is the (only) eigenvalue of $\partial_wF[\lambda,0]$ in which we are interested, and it is non-zero for $\lambda\ne\lambda^*$ near $\lambda^*$. But we see that the function $$\Lambda(\lambda):=1+\frac{g-\gamma\lambda}{k^2n^2\sigma}-\frac{\lambda^2\coth(nkh)}{kn\sigma}$$is quadratic in $\lambda$ and $\Lambda(\lambda^*)=0$. Moreover, since $g,\si>0$, we observe that it follows from \eqref{lavals} that $\lambda_-(n)<0<\lambda_+(n)$ and that $\lambda_\pm(n)$ are the roots of $\Lambda$. Because any quadratic with distinct roots changes sign at each root we have that our eigenvalue also changes sign as $\lambda$ crosses $\lambda^*$, and hence the operator $\partial_wF[\lambda,0]$ has odd crossing number at $\lambda=\lambda^*$. We thus obtain that that the continuum $\cal{C}_{m^*}$ possesses all of the properties assured by Theorem \ref{Rabthm}, with the properties (I), (II)\emph{(A)}, (II)\emph{(B)} of $\cal{C}_{m^*}$ corresponding to the properties $(i),\;(ii),\;(iii)$ of Theorem \ref{Rabthm} respectively. This completes the proof.\end{proof}
\begin{rem}As remarked earlier, it was proved in \cite{M} that there do indeed exist values of the parameters giving a two-dimensional kernel, but that two is the largest possible dimension. Unfortunately, our result cannot be applied in the full space $X$ in the case of a two-dimensional kernel, since we end up studying two simple eigenvalues of the same sign: whilst these eigenvalues do change sign at $\lambda=\lambda^*$, the operator does not have an odd crossing number. As described in Theorem \ref{mainthm}, this issue can be circumvented by restricting to a suitable subspace. If, in the notation of the proof, it should occur that \begin{equation}\frac{\max\{n_1,n_2\}}{\min\{n_1,n_2\}}\notin\mathbb{N}\label{nores}\end{equation}then we may redefine $n=\min\{n_1,n_2\}$ and the proof proceeds as before with no alterations. In particular, this yields two (locally-real-analytically parameterisable) curves of solutions, one in the space of functions with minimum period $2\pi/n_1$ and one in the space of functions with minimum period $2\pi/n_2$. However, should condition \eqref{nores} fail, then we are unable to carry out the analysis to obtain this second curve.\end{rem}

\end{document}